\newtheorem{theorem}{Theorem}
\newtheorem{prop}{Proposition}
\newtheorem*{proof}{Proof}
\newtheorem{remark}{Remark}[section]
\newtheorem{corollary}{Corollary}
\newtheorem{lemma}{Lemma}
\newtheorem{definition}{Definition}[section]
\newtheorem{problem}{Problem}
\newtheorem{claim}{Claim}
\newtheorem*{theoremnonumber}{Theorem}
\def\RR{\mathbb{R}}
\def\SS{\mathbb{S}}
\def\TT{\mathbb{T}}
\title{On equivariant isometric embeddings of Riemannian manifolds with symmetries}
\author[+]{Hongda Qiu \\ Email: hjq5042@psu.edu}
\begin{document}
\maketitle
\interfootnotelinepenalty=10000 
\begin{abstract}
	Let $(M,g)$ be a $C^\infty$-smooth, $n$-dimensional Riemannian manifold which is diffeomorphic to $\RR^n$ and admit an action of a properly discontinuous and cocompact group. This work proves the existence of a $C^\infty$ equivariant isometric embedding of $M$ in some Euclidean space $\RR^q$ where $q = \max\{s_n+2n, s_n+n+5\}$ is the same as the dimension of Matthias Günther's results.
\end{abstract}
\section{Introduction}
\textit{History.} The isometric embedding problem of Riemannian manifolds into Euclidean spaces has a rich history. John Nash \cite{nash1956} proved the existence of a global isometric embedding for every smooth Riemannian manifold into some high dimensional Euclidean space. Later, Gromov, Rokhlin and Günther \cite{rokhlin1968}\cite{Gromov1970}\cite{Gunther1989} enhanced Nash's result by reducing the required dimension of target space and by reformulating the problem in a language more accessible for further exploitation. Specifically, they directly simplified Nash's original scheme and achieved the best required dimension known so far. Moreover, in the results above, the embedding has the same regularity as the metric of the manifold. In this work, we work with $C^\infty$-case, which is consistent with Günther's statements.


As the study of isometric embeddings deepened, more attention has expanded besides smoothness and required dimension. One novel question is: Can isometric embeddings preserve symmetry? That is, if a Riemannian manifold is acted upon by a group, can we find an isometric embedding that is \textit{equivariant}, meaning that the embedding respects this group? Furthermore, for which type of groups is this possible?

Equivariant isometric embeddings of Riemannian manifolds have been studied historically for specific examples such as compact homogeneous spaces \cite{Moore1976}, compact manifolds acted upon by a Lie group \cite{Moore1980}\cite{mostwo1956} or a finite group \cite{wang2022}, and symmetric spaces \cite{cartan1929}\cite{Eschenburg2022}\cite{clozel2007}. Nevertheless, the schemes of Nash and Günther are somehow used as a black box, and it remains an open problem whether their constructions can preserve symmetry of general manifolds, especially non-compact cases. This motivates an investigation into approaches that engage more with the analytic structure of their methods. 

\textit{Focus of study.} We study smooth equivariant isometric embeddings of manifolds with symmetries, specifically, the universal covers of Riemannian tori, and establish an existence theorem. At first glance, one might consider applying the Nash embedding theorem separately to every fundamental domain of the universal cover, and then gluing the embedded images together along their boundaries. However, this is flawed, as the images of adjacent fundamental domains do not necessarily align along their shared boundaries.

Our idea is to modify Günther's method into an ``equivariant" version, yielding a result that does not incur extra cost in the required dimension of target space. Understanding his approach contextualizes our approach. Günther's machine is summarized in the next section, followed by a presentation of our modification in the one after. Our language combines the reformulation by Han and Hong \cite[Section~1.2-1.3]{han2006isometric} and Yang \cite{yang1998gunthersproofnashsisometric}.

\textit{Günther's machine.} The isometric embedding problem of Riemannian manifolds is stated as follows: Given a Riemannian manifold $(M^n,g)$, does there exist a smooth immersion/embedding $u:M^n\to\RR^N$, for $ N $ explicitly determined by $n$, such that the metric induced on $M$ via $u$ is ${g}$; that is, $du\cdot du = g$? \footnote{For convenience, in the context of isometric embedding problems, the metric tensor and its associated differential form are not distinguished.}  

Both Nash's and Günther's resolutions to the problem above involve solving a local perturbation problem formulated as follows. Let $U$ be a simply-connected, bounded open subset of $M$, and $q$ a positive integer to be specified later. Given a smooth embedding $ w:M\to\RR^q $ (NOT necessarily isometric) with an important condition (Definition~\ref{definition free map}) to be specified shortly, and a symmetric $2$-tensor field $h$ whose support is contained in $U$ (which is regarded as the goal of local perturbation), find a smooth map $ v:U \to \RR^q $ such that
\begin{equation}
	\label{eq basic}
	d(w+v)\cdot d(w+v) = dw\cdot dw + h,
\end{equation}
where ``$\cdot$" denotes the inner product in Euclidean space.

In local coordinates on $M$ within $ U $, System~\ref{eq basic} consists of $\frac{1}{2}n(n+1)$ equations. Denote $s_n:=\frac{1}{2}n(n+1)$. Following Nash and Günther, we consider the case when perturbations represented by $v$ are normal to $u(M)$ in $\RR^q$; that is, $\partial_i{w}\cdot v = 0$. Write $h = \mathop{\sum}\limits_{i,j}h_{ij}dx_idx_j$. The system under consideration, after some direct computation, becomes:
\begin{equation}
	\label{eq s_n+n}
	\begin{cases}
		&\partial_i{w}\cdot v = 0,\\
		&\partial_i\partial_j{w}\cdot v =-\frac{1}{2}h_{ij} + \frac{1}{2}\partial_iv\cdot\partial_jv,\ \text{for}\ i,j=1,2,...,n.
	\end{cases}
\end{equation}
We want the system above, when formally regarded as a linear system in the unknown vector $v$, to be irreducible. To ensure this, $q$ must be at least $s_n+n$, and an additional condition on $w$ is required. 
\begin{definition}[Free map]
	\label{definition free map}
	We say that a map is \textit{free} if its first and second order partial derivatives are linearly independent at every point of its domain. \footnote{This definition is originally proposed by Gromov and Rokhlin \cite{Gromov1970}.}
\end{definition}
Assuming that $w$ is free, Günther proved the local solvability of System \ref{eq s_n+n} \cite[Theorem~2, p.~1140]{Gunther1990}. \footnote{This is also a step in Nash's work, referred to as the Nash-Moser implicit function theorem. However, the original scheme of Nash is explained and proved in a highly complicated manner. Günther's theorem plays the same role with a lower dimension.} 


The machine employed by Günther consists of two stages. The first one is a preparation for the second: Given $(M^n,g)$, construct a short free embedding $u_*:M\to\RR^{s_n+2n}$. The second stage is a process during which a short free embedding of $M$ is perturbed (through a sequence of free embeddings) to be an isometric one. With a localizing argument (\cite[Statement~2.2, Section~2, p.~168]{Gunther1989}\footnote{For English, see \cite[Lemma~1.3.1, p.~16]{han2006isometric}}.), the metric of $M$ is rearranged as the sum of a collection of symmetric $2$-tensor fields whose supports are contained in open subsets from an open cover of $M$. This open cover is chosen to be locally finite, so the perturbing process can be performed separately on each of these open subsets. Applying this process, it is shown (\cite[Theorem~3, p.~1141]{Gunther1990}) that every short free embedding of $ M $ into $\RR^q $, with\footnote{This dimension bound on $q$ is part of the assumption on the given embedding. To clarify, we use the term ``required dimension" when stating an embedding theorem of a certain type of manifold, and ``dimension bound" when stating a perturbing statement.} $q \geq s_n+ n + 5 $, can be modified to be isometric in an arbitrarily and uniformly small change of its image (small with respect to the Euclidean distance in $\RR^q$). Combining the two stages together and selecting the larger of $\RR^{s_n+2n}$ and $\RR^{s_n+n+5}$ as the target space, Günther re-established Nash's theorem with a required dimension of $\max\{s_n+2n,s_n+n+5\}$. 


\textit{Our approach.} We set up our study in higher generality. Instead of a Riemannian torus, we consider a Riemannian manifold $(M,g)$ that is diffeomorphic to $\RR^n$ and admits a Bieberbach group $\Gamma$ (by isometries). The quotient $M/\Gamma$ naturally defines a compact manifold covered by $M$. We modify Günther's machine as follows. 1. In Section~\ref{subsection initial embedding}, we show the existence of a free equivariant embedding from $M$ to $\RR^{s_n+2n}$ (Lemma~\ref{lemma initial}). This embedding plays the same role as the free embedding $u_*$ in the first stage of Günther's machine. 2. In Section~\ref{subsection main theorem}, a periodic open cover of $M$ is chosen by translating the pre-images of open sets of a finite open cover of ${M/\Gamma}$ by deck transformations. Every local perturbation is performed periodically, with its support contained in one set of this open cover. 




Finally, by modifying Günther's perturbing process and proving an argument to avoid self-intersections, we establish our main result (Theorem~\ref{theorem main}) in Section~\ref{subsection main theorem}. 

\begin{theorem}
	\label{theorem main}
	Let $(M,g)$ be a Riemannian manifold which is diffeomorphic to $\RR^n$ and admits an action of a Bieberbach group $\Gamma$. Let $ u_0:M\to\RR^q, q\geq s_n+n+5 $ be a smooth free equivariant embedding such that $g-du_0\cdot du_0$ is positive-definite. Then there exists a smooth free equivariant isometric embedding $u:M\to\RR^{q}$. 
\end{theorem}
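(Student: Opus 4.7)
My plan is to adapt Günther's second-stage perturbation scheme to the equivariant setting, producing a sequence of free equivariant embeddings $u_0, u_1, u_2, \ldots$ that converges to the desired $u$. Since $\Gamma$ acts freely, properly discontinuously, and cocompactly, $M/\Gamma$ is a compact smooth manifold. I would first pick a finite open cover $\{V_1,\ldots,V_k\}$ of $M/\Gamma$ by small, evenly covered, simply connected sets and lift each $V_i$ to a simply connected $U_i\subset M$; then $\{\gamma U_i:\gamma\in\Gamma,\,1\le i\le k\}$ is a locally finite, $\Gamma$-invariant open cover of $M$. The defect $h_0:=g-du_0\cdot du_0$ is $\Gamma$-invariant (because $g$ is and $u_0$ is equivariant) and positive-definite by hypothesis, so it descends to $M/\Gamma$; Günther's localizing lemma, applied downstairs and lifted, produces a $\Gamma$-invariant decomposition $h_0=\sum_{i,\gamma}h^{(i)}_\gamma$ with each $h^{(i)}_\gamma$ supported in $\gamma U_i$ and $h^{(i)}_\gamma=\gamma_* h^{(i)}_e$.

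Let $\rho:\Gamma\to\mathrm{Isom}(\RR^q)$ be the homomorphism making $u_0$ equivariant and write $\rho(\gamma)(y)=R(\gamma)y+t(\gamma)$ with $R(\gamma)\in O(q)$. Given a current free equivariant embedding $u$ and a small target piece $h^{(i)}_e$ supported in $U_i$, I would solve System~\ref{eq s_n+n} on $U_i$ via \cite[Theorem~2, p.~1140]{Gunther1990}, obtaining a normal perturbation $v_i$ supported in $U_i$, and then extend it to $M$ by
\[
\tilde v_i(y):=R(\gamma)\,v_i(\gamma^{-1}y)\quad\text{for }y\in\gamma U_i,
\]
and zero elsewhere. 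Because $u$ is equivariant, the pulled-back local system on each $\gamma U_i$ is carried onto the one on $U_i$ by $\rho(\gamma)$, so this extension is unambiguous and solves the equivariant analogue of \ref{eq s_n+n} with target $\sum_\gamma h^{(i)}_\gamma$; hence $u+\tilde v_i$ is again free and equivariant. Cycling through $i=1,\ldots,k$ and, as in Günther's proof, dyadically subdividing the $h^{(i)}$ so that each single perturbation is small in the appropriate $C^m$-norm on one fundamental domain, produces a sequence $\{u_j\}$ of free equivariant embeddings that is Cauchy in every $C^m$-norm on compact sets, whose $C^\infty$ limit $u$ satisfies $du\cdot du=g$.

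The one place where real work beyond Günther is required — and what I expect to be the main obstacle — is ruling out self-intersection of the limit: on a non-compact manifold even arbitrarily small $C^0$ perturbations of an embedding may fail to remain embeddings, so the argument does not follow by mere smallness of individual steps. The idea I would use is that $u_0$ being a $\Gamma$-equivariant embedding of the simply connected $M$ forces $\rho(\Gamma)$ to act properly discontinuously on $u_0(M)$; hence for any compact fundamental domain $\overline F\subset M$ the Euclidean distances $\delta_\gamma:=\mathrm{dist}(u_0(\overline F),\rho(\gamma)u_0(\overline F))$ are positive for $\gamma\ne e$ and tend to infinity with the word length of $\gamma$, so only finitely many of them are smaller than, say, $1$. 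Fixing $\delta:=\tfrac14\min\{\delta_\gamma:\gamma\ne e,\,\delta_\gamma\le 1\}>0$ and constraining the iteration so that $\sum_j\|\tilde v_{i,j}\|_{C^0}<\delta$ blocks inter-domain collisions, while Günther's standard preservation of the embedding property on the compact $\overline F$ blocks intra-domain collisions. The dimension condition $q\ge s_n+n+5$ is inherited from Günther without change, because the equivariant passage does not alter the linear-algebraic structure of \ref{eq s_n+n} or its solvability estimates.
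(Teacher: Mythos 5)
Your high-level scheme matches the paper's in all essential respects: lift a finite Günther-type decomposition of the defect $g - du_0\cdot du_0$ from the compact quotient $M/\Gamma$ to a $\Gamma$-invariant decomposition on $M$, then perturb equivariantly piece by piece, and choose perturbation sizes small enough to rule out self-intersections between distinct $\Gamma$-translates. The one substantive divergence is which local tool of Günther you invoke. You reach for the raw PDE solvability result \cite[Theorem~2]{Gunther1990}, which (as you anticipate) has a smallness hypothesis on $h$, forcing you into dyadic subdivision and an infinite $C^\infty$-Cauchy sequence of perturbations — and hence obliging you to re-establish freeness, embeddedness, and convergence in the limit. The paper instead uses Günther's Theorem~2.4 of \cite{Gunther1989}, stated here as Lemma~\ref{lemma technical tool by gunther}: for a tensor of Property~(E) supported in a relatively compact $U$, it produces in a \emph{single} step a free embedding $u_\epsilon$ with $du_\epsilon\cdot du_\epsilon = du_0\cdot du_0 + h$ \emph{exactly}, with $u_\epsilon = u_0$ off $U$, and with $|u_\epsilon - u_0|_{\RR^q}\le\epsilon$ for any prescribed $\epsilon>0$. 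Since Proposition~\ref{proposition gunther} gives a \emph{finite} cover $\{U_1,\dots,U_L\}$ downstairs, the outer loop terminates after exactly $L$ exact steps; no limiting argument, no dyadic re-splitting, no Cauchy estimates in $C^m$, and freeness/embeddedness come for free from the one-step lemma. Your route is workable but carries all of that extra burden, so I would push you to swap \cite[Theorem~2]{Gunther1990} for Lemma~\ref{lemma technical tool by gunther}.

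On self-intersections, your global argument (discreteness of $\rho(\Gamma)\subset\mathrm{Isom}(\RR^q)$, a uniform gap $\delta$ below which only finitely many inter-domain distances fall, and a total $C^0$-budget below $\delta$) is sound in spirit and can be completed. Two cautions. First, be careful with the assertion that the distances $\delta_\gamma$ ``tend to infinity with word length''; what you actually need — and what follows cleanly from the fact that $\rho$ is injective and discrete so $\rho(\Gamma)$ acts properly discontinuously on $\RR^q$ — is that only finitely many $\gamma$ bring $\rho(\gamma)u_0(\overline F)$ within distance $1$ of $u_0(\overline F)$, which is weaker and easier. Second, the phrase ``Günther's standard preservation of the embedding property'' is doing real work that needs to be cashed out if you use \cite[Theorem~2]{Gunther1990} directly, since that theorem solves the PDE but does not itself assert that $u+v$ is an embedding. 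The paper's per-step argument sidesteps both issues: with $u_l$ obtained from Lemma~\ref{lemma technical tool by gunther} on the single lift $\widehat U_l$, one checks (i) off $p^{-1}(U_l)$ nothing moved, (ii) the translates $u_l(\tau\widehat U_l)$ stay in disjoint $\epsilon_l$-tubes once $\epsilon_l\le\tfrac12\min_{\tau\ne e}d_{\RR^q}(u_{l-1}(\widehat U_l),u_{l-1}(\tau\widehat U_l))$, and (iii) $u_l(\widehat U_l)$ misses $u_l(M\setminus p^{-1}(U_l))$ because the single-chart map $\widehat u_l$ is already an embedding. Both routes rest on the same discreteness fact you identified; the paper's is more local and avoids the asymptotics.
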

Applying Lemma~\ref{lemma initial} in Section~\ref{subsection initial embedding},
\begin{corollary}
	\label{corollary main}
	$(M,g)$ possesses a smooth equivariant isometric embedding into $\RR^q$ with $q={\max\{s_n+2n, s_n+n+5\}}$.
\end{corollary}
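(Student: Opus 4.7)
The plan is to derive Corollary~\ref{corollary main} as a straightforward consequence of Lemma~\ref{lemma initial} and Theorem~\ref{theorem main}, bridged by a rescaling step that produces a strictly short initial datum for the main theorem.

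First, I would invoke Lemma~\ref{lemma initial} to obtain a smooth free equivariant embedding $u_*:M\to\RR^{s_n+2n}$. Since the corollary's target dimension $q = \max\{s_n+2n,\,s_n+n+5\}$ may strictly exceed $s_n+2n$ (this happens precisely for $n\leq 4$), I would first embed $u_*$ into $\RR^q$ by padding with constant zero coordinates and extending the $\Gamma$-action on $\RR^q$ to act trivially on the new coordinates. This preserves both freeness (appending zero rows cannot destroy linear independence of the derivative vectors) and equivariance, while the extended action on $\RR^q$ is still by affine isometries. For brevity I will continue to denote this extended embedding by $u_*:M\to\RR^q$.

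Next, I would scale: set $u_0 := \epsilon\, u_*$ for a small constant $\epsilon > 0$ to be chosen. Scaling by a nonzero factor preserves linear independence of first- and second-order partial derivatives, so $u_0$ remains free; moreover, if $u_*(\gamma\cdot x) = A_\gamma u_*(x) + b_\gamma$ for each $\gamma\in\Gamma$, then $u_0(\gamma\cdot x) = A_\gamma u_0(x) + \epsilon b_\gamma$, so $u_0$ is equivariant under the affine isometric $\Gamma$-action on $\RR^q$ obtained by rescaling the translation parts. It remains to choose $\epsilon$ so that $g - du_0\cdot du_0 = g - \epsilon^2\, du_*\cdot du_*$ is positive-definite on all of $M$. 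Both tensors are $\Gamma$-invariant (the second since $u_*$ is equivariant under isometries of $\RR^q$), so they descend to continuous symmetric $2$-tensors on the compact quotient $M/\Gamma$. A standard compactness argument then furnishes some uniform $\epsilon_0 > 0$ with $\epsilon_0^2\, du_*\cdot du_* < g$ pointwise; any $\epsilon\leq\epsilon_0$ makes $u_0$ satisfy every hypothesis of Theorem~\ref{theorem main}, and applying that theorem produces the desired smooth equivariant isometric embedding.

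The hard part here will not be analytic — all of the genuine work, namely existence of the initial free equivariant embedding and its perturbation to an isometric one, has been encapsulated in Lemma~\ref{lemma initial} and Theorem~\ref{theorem main}. What needs care is the bookkeeping: verifying that freeness and equivariance survive both the padding and the scaling, and extracting the uniform strict-shortness bound from compactness of the quotient $M/\Gamma$, which is exactly where the Bieberbach hypothesis is used in this last reduction.
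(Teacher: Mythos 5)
Your proposal is correct and follows the same route as the paper: invoke Lemma~\ref{lemma initial} to produce the initial free equivariant short embedding, then feed it to Theorem~\ref{theorem main}. Two small observations. First, the rescaling step is redundant: the conclusion of Lemma~\ref{lemma initial} already asserts that $g - du_0\cdot du_0$ is positive-definite (the scaling-by-a-small-constant argument you describe is in fact carried out \emph{inside} the proof of that lemma, using precisely the compactness of $M/\Gamma$), and since padding with constant zero coordinates leaves $du\cdot du$ unchanged, positive-definiteness of the defect survives the padding for free. Second, your explicit treatment of the padding step when $s_n+2n < s_n+n+5$ (i.e.\ $n\le 4$) is a genuine point of care that the paper elides — its proof is the single phrase ``Applying Lemma~\ref{lemma initial},'' leaving the reader to supply the observation that appending zero coordinates preserves freeness, equivariance, and shortness. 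So your write-up is a somewhat more detailed version of the paper's intended argument rather than a different one.
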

Our approach illustrates that it is possible to impose symmetry-preserving conditions on isometric embeddings without increasing the required dimension when compared to Günther's original result. Compared to the joint work of Dmitri Burago and the author \cite[Theorem~2]{hongda2025embedding}, this result modifies Günther's machine directly. However, we need to reconsider if our approach can be generalized to the case when $\Gamma$ is not co-compact.

Finally, we propose and discuss several open problems.

\begin{problem}
	Let $M$ be a Riemannian manifold which is diffeomorphic to $\RR^n$ and admits a group action that is properly discontinuous but non-co-compact. Does $M$ admit a smooth equivariant isometric embedding into some Euclidean space? If the answer is positive, what would be the optimal dimension?
\end{problem}

We hope that there is a way to answer the following open problem by Gromov affirmatively:
\begin{problem}
	Is it possible to reduce the required dimension to $s_n+n$ for $C^\infty$-generic embeddings on the universal cover of a Riemannian torus? 
\end{problem}
One can also ask the problem above for a smooth manifold that is diffeomorphic to $\RR^n$ and admits a Bieberbach group action. A potential path to this problem seems to be by modifying Günther's Lemma (Lemma~\ref{lemma technical tool by gunther}). Namely, given $M$, one might try to prove an analogue of Lemma~\ref{lemma technical tool by gunther} for generic $C^\infty$-smooth metrics on $M$ with $q=s_n+n$ and establish a generic version of Theorem~\ref{theorem main}.

On the contrary, one can ask whether it is possible to reduce the dimension of target space $q$ to any amount smaller than $s_n+n$, but the answer is likely negative as System \ref{eq s_n+n} becomes over-determined. Furthermore, there are known results about non-embedibility (see, for instance, \cite[Theorem~1, Appendix~I]{Gromov1970}) suggesting that it is hard to reduce $q$ below $s_n-1$.

\textbf{Acknowledgement}: I would like to thank Dmitri Burago for his time on the terrible first draft of this paper.
%
%
%




%
%
%
%

\section{Proofs}
\subsection{Initial embedding}
\label{subsection initial embedding}
\begin{lemma}
	\label{lemma initial}
	There exists a smooth free equivariant embedding $u_0:M\to\RR^{s_n+2n}$ such that $g-du_0\cdot du_0$ is positive-definite.
\end{lemma}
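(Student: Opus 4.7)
The plan is to decompose the target as $\RR^{s_n+2n}=\RR^{s_n+n}\oplus\RR^n$ and assemble $u_0$ as a pair $(\psi,\phi)$: the first summand will carry a $\Gamma$-invariant free map $\psi$ pulled back from $M/\Gamma$ and contribute freeness together with separation of distinct $\Gamma$-orbits, while the second summand will carry a smooth $\Gamma$-equivariant map $\phi$ into $\RR^n$, with $\Gamma$ acting by its defining Bieberbach representation, contributing equivariance, properness, and the distinction of points within an orbit. A uniform scaling by a small $\epsilon>0$ will then make $g-du_0\cdot du_0$ positive-definite.

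To produce $\psi$, I would first build a smooth free embedding $\psi_0\colon M/\Gamma\to\RR^{s_n+n}$ of the compact quotient: for $n\ge 2$ one has $s_n+n\ge 2n+1$, so a Whitney embedding followed by a standard transversality perturbation (freeness is an open and generic condition in codimension $s_n$) does the job; the case $n=1$ is just the unit circle in $\RR^2$. Pulling back by the covering projection $\pi\colon M\to M/\Gamma$, the map $\psi=\psi_0\circ\pi$ is smooth, $\Gamma$-invariant, and still free on $M$ because $\pi$ is a local diffeomorphism.

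For $\phi$, I would exploit that $M/\Gamma$ and the flat Bieberbach manifold $\RR^n/\Gamma$ are both compact aspherical $n$-manifolds with fundamental group $\Gamma$; they are therefore both $K(\Gamma,1)$'s and are homotopy equivalent. Realizing a homotopy equivalence by a smooth map $\bar\phi\colon M/\Gamma\to\RR^n/\Gamma$ inducing the identity on $\Gamma$ (via obstruction theory for aspherical targets plus smooth approximation) and lifting to universal covers yields a smooth $\Gamma$-equivariant map $\phi\colon M\to\RR^n$. Cocompactness plus proper discontinuity of the target action make $\phi$ automatically proper: any sequence leaving every compact subset of $M$ can be written as $\gamma_k y_k$ with $y_k$ in a relatively compact fundamental domain and $\gamma_k\to\infty$ in $\Gamma$, hence $\phi(\gamma_k y_k)=\gamma_k\phi(y_k)$ leaves every compact subset of $\RR^n$.

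Finally, setting $u_0=\epsilon(\psi,\phi)$ and equipping $\RR^{s_n+2n}$ with the isometric $\Gamma$-action that is trivial on the first $s_n+n$ coordinates and is the scaled Bieberbach action (rotations unchanged, translations multiplied by $\epsilon$) on the last $n$, I would check the required properties: equivariance and smoothness are immediate; freeness is inherited from $\psi$; injectivity holds because $u_0(p)=u_0(q)$ forces $\psi_0(\pi(p))=\psi_0(\pi(q))$, so $q=\gamma p$ for some $\gamma\in\Gamma$, whence $\gamma\phi(p)=\phi(p)$ and freeness of the Bieberbach action on $\RR^n$ gives $\gamma=e$; properness of $u_0$ is inherited from that of $\phi$, so $u_0$ is an embedding. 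For the positivity condition, $du_0\cdot du_0=\epsilon^2(d\psi\cdot d\psi+d\phi\cdot d\phi)$ is $\Gamma$-invariant, so $g-du_0\cdot du_0$ descends to the compact quotient $M/\Gamma$, where $g$ is uniformly positive-definite; taking $\epsilon$ small enough makes the difference positive-definite everywhere. I expect the genuinely subtle step to be the construction of $\phi$: one cannot presume an equivariant diffeomorphism $M\to\RR^n$, since the smooth Borel-type rigidity needed is not automatic in high dimensions, so $\phi$ is only a smooth equivariant map, and the injectivity and freeness of $u_0$ have to be supplied by $\psi$ and by the free action on the target rather than by $\phi$ itself.
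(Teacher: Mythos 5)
Your decomposition $\RR^{s_n+2n}=\RR^{s_n+n}\oplus\RR^n$ is essentially the same one the paper uses, but your construction of the $\RR^{s_n+n}$-factor has a genuine gap. You claim that a Whitney embedding of $M/\Gamma$ into $\RR^{s_n+n}$ can be perturbed by transversality to a \emph{free} embedding, asserting that "freeness is an open and generic condition in codimension $s_n$." This is false. Freeness asks the $n+s_n$ first and second partials to be linearly independent; in the space of $(n+s_n)\times q$ matrices the set of rank-deficient ones has codimension $q-(n+s_n)+1$, so for $q=s_n+n$ this codimension is $1$, and a generic map fails to be free along an $(n-1)$-dimensional set. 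Generic freeness first kicks in at $q=s_n+2n$, not $s_n+n$. The existence of free maps of general $n$-manifolds into $\RR^{s_n+n}$ is in fact essentially the open problem of Gromov quoted later in the paper (Problem~2), so your argument quietly assumes a solution to an open problem.

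The paper sidesteps this by asking for strictly less on the quotient: Claim~\ref{claim initial} produces an immersion $w_0\colon M/\Gamma\to\RR^{s_n+n}$ whose \emph{second} partial derivatives alone are linearly independent. This condition has codimension $q-s_n+1=n+1>n$ at $q=s_n+n$, so it can be obtained by a Whitney-plus-Veronese construction followed by generic projections, which is exactly what the paper does. The missing first-derivative independence is then supplied for free by taking the $\RR^n$-factor to be the identity $M\cong\RR^n$, whose second derivatives vanish; with $u_0(x)=(x,w_0\circ p(x))$, the $e_i$'s in the first block kill any linear relation among the $\partial_i u_0$'s, and the block of zeros decouples them from the $\partial_i\partial_j u_0$'s. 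Your $\phi$ built via $K(\Gamma,1)$/obstruction theory has no reason to be affine, so its second derivatives do not vanish, and the decoupling fails: even if you weakened $\psi_0$ to an immersion with independent second derivatives (which would be correct), joint independence of the first and second derivatives of $(\psi,\phi)$ does not follow. So either $\psi_0$ must be free (unjustified) or $\phi$ must have vanishing Hessian (not produced by your construction). The hypothesis that $M$ is diffeomorphic to $\RR^n$ is precisely what lets the paper take the identity map here and avoid both the homotopy-theoretic machinery and the freeness obstruction.
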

\begin{remark}
	This lemma is an analogue of \cite[Statement~2.5.3]{Gromov1970} in our settings. In our proof, we use a trick from the proof of this referred statement.
\end{remark}
\begin{proof}
	We claim the following:
	\begin{claim}
		\label{claim initial}
		There exists a smooth immersion ${w_0}:{M/\Gamma}\to\RR^{q}$, with $q=s_n+n$, such that its second order partial derivatives are linearly independent pointwise on the entire ${M/\Gamma}$. 
	\end{claim}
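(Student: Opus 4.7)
My plan is to prove Claim~\ref{claim initial} via a transversality argument in the $2$-jet bundle, noting first that $M/\Gamma$ is a closed smooth $n$-manifold (since $\Gamma$ is torsion-free, being Bieberbach, and acts freely, properly discontinuously, and cocompactly on $M$). Set $q := s_n+n$. The goal is to show that the two pointwise conditions ``$w_0$ is an immersion'' and ``the second partial derivatives of $w_0$ are linearly independent at each point'' are satisfied by a residual (hence nonempty) set of maps $w_0 \in C^\infty(M/\Gamma, \RR^q)$ in the Whitney $C^\infty$ topology.

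The key step is a codimension count in each fiber of $J^2(M/\Gamma, \RR^q)$. At a point $p \in M/\Gamma$ (with some choice of local coordinates), a $2$-jet is a triple $(y, L, B)$ where $L \in \mathrm{Hom}(T_p(M/\Gamma), \RR^q)$ records the first derivatives and $B \in \mathrm{Hom}(S^2 T_p(M/\Gamma), \RR^q)$ records the symmetric second derivatives. The ``bad'' loci in each fiber are
\[ \Sigma_1 = \{(y,L,B) : \operatorname{rank}(L) < n\}, \qquad \Sigma_2 = \{(y,L,B) : \operatorname{rank}(B) < s_n\}. \]
By the standard rank stratification on spaces of matrices, $\Sigma_1$ has codimension $q-n+1 = s_n+1$ and $\Sigma_2$ has codimension $q-s_n+1 = n+1$; both strictly exceed $\dim(M/\Gamma)=n$.

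Applying Thom's transversality theorem to a Whitney stratification of $\Sigma_1 \cup \Sigma_2$, one obtains a residual set of maps $w_0$ whose $2$-jet extension $j^2 w_0 : M/\Gamma \to J^2(M/\Gamma, \RR^q)$ is transverse to every stratum. Since each stratum has codimension strictly greater than $\dim(M/\Gamma)$, transversality forces $j^2 w_0$ to miss the bad locus entirely. Any such $w_0$ is simultaneously an immersion and has pointwise linearly independent second derivatives, which proves the claim.

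The only routine check is the codimension computation for the rank stratifications, which is classical linear algebra, so I do not anticipate a serious conceptual obstacle. One need only take care to invoke a Whitney (or algebraic) stratification of the bad loci so that Thom's theorem applies cleanly; compactness of $M/\Gamma$ ensures there is no distinction between the strong and weak Whitney topologies here.
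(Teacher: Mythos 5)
Your approach is correct, but it takes a genuinely different route from the paper. The paper proves the claim in two constructive steps: (a) it takes a Whitney embedding $u_1: M/\Gamma \to \RR^{2n}$ and composes with the explicit degree-two ``Veronese'' map $u_2: y\mapsto (y, y_iy_j)$, which is free; since composing a regular map with a free map is free, one obtains an immersion with linearly independent second derivatives into $\RR^{s_{2n}+2n}$; (b) it then reduces the target dimension one step at a time by composing with a projection $\pi_v$ onto a hyperplane, using a Sard-type dimension count over the bundles $\Sigma$ and $\Sigma'$ to pick a direction $v$ for which both the immersion condition and the linear independence of second derivatives survive the projection, iterating down to $q=s_n+n$. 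You instead apply Thom jet transversality directly in the target dimension $q=s_n+n$: the non-immersion locus $\Sigma_1$ has fiberwise codimension $q-n+1=s_n+1>n$ and the degenerate-Hessian locus $\Sigma_2$ has fiberwise codimension $q-s_n+1=n+1>n$, so a residual set of maps misses both bad loci. This is cleaner in that it lands in the right dimension at once and does not pass through the high-dimensional intermediate embedding or the iterated projections; the paper's step (b) is essentially a hands-on implementation of the same codimension counting, one projection at a time.

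One nuance worth spelling out: unlike $\Sigma_1$, the set $\Sigma_2$ is not intrinsically defined inside $J^2(M/\Gamma,\RR^q)$, because under a change of source coordinates the second-derivative block $B$ transforms affinely with a contribution from $L$ (namely $B\mapsto B\cdot(D\phi\otimes D\phi)+L\cdot D^2\phi$), so ``$\operatorname{rank}(B)<s_n$'' is not a coordinate-invariant condition. (This is unavoidable, since the claim itself deliberately asks for something weaker and coordinate-dependent: full freeness in $\RR^{s_n+n}$ would be a codimension-one condition and could not be arranged generically.) To make your $\Sigma_2$ a genuine stratified subset of the jet bundle you should fix a splitting $J^2\cong J^1\oplus\operatorname{Hom}(S^2T,\RR^q)$, e.g.\ via a Riemannian connection on $M/\Gamma$ (equivalently, a fixed finite atlas with the transversality argument applied chart by chart and the finitely many residual sets intersected, using compactness). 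With that fix in place, the argument is complete; the paper leaves the same coordinate-dependence implicit, so this is a point of hygiene rather than a gap in your reasoning.
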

	We first prove the lemma applying this claim, then prove the claim itself.
	
	\textbf{Proof of Lemma~\ref{lemma initial} applying Claim~\ref{claim initial}} Denote by $p:M\to{M/\Gamma}$ the covering map. Let us define
	\begin{equation}
		u_0: M\to\RR^{s_n+2n}, x\mapsto(x, {w_0}\circ p(x)).
	\end{equation}
	Since $x\mapsto x$ is an identity map, $u_0$ is injective, hence an embedding. Clearly, this embedding is equivariant. To see freeness, note that its derivatives are
	\begin{equation}
		\partial_iu_0 = (\underbrace{0,...,0,\overbrace{1}^{the\ i-th},0,...,0}_{n\ arguments},\quad\partial_i({w_0}\circ p)), \partial_i\partial_ju_0 = (\overrightarrow{0}, \partial_i\partial_j({w_0}\circ p))\tag*{for $1\leq i,j\leq n$.}
	\end{equation}
	Here, since the covering map $p$ is locally isometric, we can apply Claim~\ref{claim initial} to conclude that $ \partial_i\partial_j{(w_0\circ p)} $ are linearly independent, so $u_0$ is a free equivariant embedding. Finally, if $g-du_0\cdot du_0$ is not positive-definite, we can multiply $u_0$ by a small enough constant. The proof applying Claim~\ref{claim initial} is concluded.
	
	\textbf{Proof of Claim~\ref{claim initial}} To prove this claim, we show that (a) the claim holds for some dimension (of the target space); (b) for any integer $q_1 > s_n+n$, the claim with $ q=q_1$ implies that with $ q=q_1-1$. 
	
	(a) We want to prove this part for $ q = s_{2n} + 2n $. It follows from the Whitney's embedding theorem \cite{whitney1944} that there is a regular 
	smooth embedding $u_1: {M/\Gamma}\to\RR^{2n}$. 
	
	Composing $u_1$ with the map $ u_2: y\mapsto(y,y_1^2,...,y_iy_j,...,y_{2n}^2)$, $1\leq i\leq j\leq 2n$, we obtain a smooth embedding $u = u_2\circ u_1:\TT^{n}\to\RR^{s_{2n}+2n}$. Note that $u_2$ is free. It is known that the composition of any regular smooth map with a free map is still free\footnote{To see this, note that the composition of a diffeomorphism and a free map is still free\cite[Discussion after Definition~1.1.1, Section~1.1]{han2006isometric}, so freeness does not rely on choices of coordinates. Applying the inverse function theorem to the regular smooth map, its image can be locally realized as a smooth submanifold of the domain of the free map. A sketch of the proof is also available in \cite[Semisolutions of 3.1 and 3.5]{petrunnin2025threelectures}.}, so $u$ is free, and hence has linearly independent second derivatives.
	
	(b) Assume that $f$ is a smooth immersion $ {M/\Gamma}\to\RR^q $, with $q> s_n+n$, which has linearly independent second order partial derivatives. Denote by $\Sigma(x)$ the collection of all unit vectors in the subspace of $ \RR^q $ spanned by second order partial derivatives of $f$ at $x$; that is
	\begin{equation}
		\Sigma(x) = \{v||v|=1,v\in span\{\partial_i\partial_jf(x);i,j=1,...,n\}\},
	\end{equation}
	where every $ \partial_i\partial_jf(x) $ is regarded as a vector in $\RR^q$.
	
	Let $\Sigma$ be the union of all spheres $(x,\Sigma(x))\subset{M/\Gamma}\times\RR^q$ for all $x\in{M/\Gamma}$.\footnote{Equivalently, $\Sigma$ is the union of all fibres of the spherical bundle defined with base space ${M/\Gamma}$, total space $\mathop{\bigcup}\limits_{x\in {M/\Gamma}}(x,\Sigma(x) )$ and projection $ (x,v)\mapsto x $ for $v\in \Sigma(x) $. } Note that for any $x\in{M/\Gamma}$, the dimension of $ \Sigma(x) $ is $s_n-1$. Then $\Sigma$ defines an $(s_n+n-1)$-dimensional smooth submanifold of ${M/\Gamma}\times\RR^q$. 
	
	We want to carefully choose a projection from $\RR^q$ to some $q-1$-dimensional linear subspace such that its composition with $f$ still has linearly independent second order partial derivatives. Consider the projection ${\pi_0}$ from ${M/\Gamma}\times\RR^q$ to $\RR^q$ given by $(x,v)\mapsto v$. Since the dimension of $\Sigma$ is less than $q-1$, the image $ {\pi_0}(\Sigma) $ has measure zero in $\SS^{q-1}$, implying that $\SS^{q-1}\setminus {\pi_0}(\Sigma)$ is nonempty (so it contains at least one point). 
	
	Take any $ v \in  \SS^{q-1}\setminus {\pi_0}(\Sigma)$. Let $V$ be the $(q-1)$-linear subspace of $\RR^q$ normal to $v$. Denote by ${\pi_{v}}:\RR^q\to\RR^q$ the projection from $\RR^q$ to $V$. If we fix $x\in{M/\Gamma}$ and identify $ \Sigma(x) $ as a subset of the unit sphere in $\RR^q$ centered at the origin, it follows that ${\pi_{v}}$ is injective on $ \Sigma(x) $. Since ${\pi_{v}}$ is linear, it is also injective on the linear space $ span\{\partial_i\partial_jf(x);i,j=1,...,n\} $. Note that ${\pi_{v}}$ commutes with the partial derivative operator; that is, $ \partial_i\partial_j({\pi_{v}}\circ f) = {\pi_{v}}(\partial_i\partial_jf) $. It follows that for every $x$, the space $ span\{\partial_i\partial_j({\pi_{v}}\circ f)(x);i,j=1,...,n\} $ is also of dimension $s_n$, so the composition ${\pi_{v}}\circ f$ has linearly independent second order partial derivatives.
	
	It remains to show that the projection ${\pi_{v}}$ can be chosen such that ${\pi_{v}}\circ f$ is still an immersion. It suffices to show that for generic ${\pi_{v}}$, ${\pi_{v}}\circ f$ has linear independent first derivatives. 
	
	This can be done in a manner similar to above. Denote by $\Sigma'(x)$ the set
	\begin{equation}
		\Sigma'(x) = \{v||v|=1,v\in span\{\partial_if(x);i=1,...,n\}\},
	\end{equation}
	where every $\partial_if(x)$ is regarded as a vector in $\RR^q$. Let $\Sigma'$ be the union $ \mathop{\bigcup}\limits_{x\in {M/\Gamma}}(x,\Sigma'(x))\subset{M/\Gamma}\times\RR^q $. This is an $(n-1)$-dimensional smooth submanifold of ${M/\Gamma}\times\RR^q$. Recall our definition of the projection ${\pi_0}:{M/\Gamma}\times\RR^q \to \RR^q, (x,v)\mapsto v$. Then the image ${\pi_0}(\Sigma')$ has zero measure in $\SS^{q-1}$. Hence for any $v'\in\SS^{q-1}\setminus{\pi_0}(\Sigma')$, the projection ${\pi_{v'}}:\RR^q\to\RR^q$ from $\RR^q$ to the $(n-1)$-dimensional linear subspace of $\RR^q$ normal to $v'$ is injective on $\Sigma'(x)$ (when we identify $\Sigma'(x)$ as a subset of the unit sphere in $\RR^q$ centered at the origin). It follows that ${\pi_{v'}}\circ f$ has linear independent first derivatives and hence is an immersion.
	
	Finally, we note that both $ {\pi_0}(\Sigma) $ and ${\pi_0}(\Sigma')$ are of dimension smaller than $(q-1)$. Thus it suffices to choose a projection ${\pi_{v}}$ determined by any vector $v\in\SS^{q-1}\setminus\{{\pi_0}(\Sigma)\cup{\pi_0}(\Sigma')\}$. We conclude the proof of (b) by taking $ {w_0} = {\pi_{v}}\circ f :{M/\Gamma}\to\RR^{q-1} $.
\end{proof}

\subsection{Proof of Theorem~\ref{theorem main}}
\label{subsection main theorem}
To prove Theorem~\ref{theorem main}, we need two auxiliary lemmas. The first lemma (Lemma~\ref{lemma localizing}) reduces the embedding problem of $M$ to that of the pre-image of a simply-connected, bounded open subset of ${M/\Gamma}$ via the covering map. It is a modification of a theorem by Günther on general manifolds \cite[Definition~2.1, p.167 and Theorem~2.2, p.168]{Gunther1989}, translated into English as follows.

\begin{definition}[``Property~(E)", Günther]
	\label{definition property E}
	Let $U$ be an open subset of a smooth manifold $\mathcal{M}$. A smooth symmetric $2$-tensor field $h$ on $\mathcal{M}$ is said to \textit{be of Property~(E) on $U$} if it is possible to choose a proper local coordinate system on $U$ such that $ h = {a}^4dx_1^2 $ with a smooth function ${a}:\mathcal{M}\to\RR$ whose support is contained in $U$.
\end{definition}
\begin{prop}[Günther]
	\label{proposition gunther}
	Let $ (\mathcal{M},g) $ be a smooth manifold. There exists a locally finite open cover $\{U_l\}$ by simply-connected, relatively compact sets and at most countably many smooth $2$-tensor fields $h_l$ of Property~(E) on $U_l$ satisfying
	\begin{equation}
		\label{eq localize summation gunther's original proposition}
		g = \mathop{\sum}\limits_{l}h_l
	\end{equation}
\end{prop}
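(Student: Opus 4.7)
The plan is to combine a local algebraic decomposition of the metric as a strictly positive combination of squares of constant coordinate 1-forms, with a partition-of-unity argument to globalize.

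For the local step, fix $p \in \mathcal{M}$ and choose coordinates $(x_1, \ldots, x_n)$ centered at $p$ in which $g_{ij}(p) = \delta_{ij}$ (by a linear change of coordinates from any initial chart). The identity
\begin{equation}
\alpha^+ (dx_i + dx_j)^2 + \alpha^- (dx_i - dx_j)^2 = (\alpha^+ + \alpha^-)(dx_i^2 + dx_j^2) + 2(\alpha^+ - \alpha^-)\, dx_i\, dx_j
\end{equation}
allows one to trade cross terms for weighted squares of elementary 1-forms. Choose a constant $T$ with $0 < T < 1/(n-1)$ and set $\alpha_{ij}^\pm(x) = (T \pm g_{ij}(x))/2$, $c_i(x) = g_{ii}(x) - (n-1)T$; a direct check gives
\begin{equation}
g = \sum_i c_i\, dx_i^2 + \sum_{i<j}\bigl[\alpha_{ij}^+ (dx_i + dx_j)^2 + \alpha_{ij}^- (dx_i - dx_j)^2\bigr].
\end{equation}
At $p$ the coefficients equal $1 - (n-1)T$ and $T/2$, both positive; by continuity they remain smooth and strictly positive on some neighborhood $V_p$ of $p$. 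Each summand is then of the form $b(x)(d\phi)^2$ with $\phi \in \{x_i,\, x_i \pm x_j\}$ and $b > 0$ smooth, so $a := b^{1/4}$ is smooth and $\phi$, having nonvanishing differential, extends to the first coordinate of a coordinate system $(y_1 = \phi, y_2, \ldots, y_n)$; each summand thus takes the form $a^4 (dy_1)^2$ required by Property~(E).

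To globalize, I would select by paracompactness a locally finite cover $\{V_k\}$ of $\mathcal{M}$ by simply-connected, relatively compact open sets, each contained in a neighborhood on which the local step applies. The remaining issue is that a plain partition of unity $\{\chi_k\}$ would not preserve the $a^4$ form, since $\chi_k^{1/4}$ is not smooth near the zero set of $\chi_k$. To fix this I would construct a \emph{fourth-power partition of unity}: starting from a smooth family $\phi_k \ge 0$ with support in $V_k$ and $\sum_k \phi_k > 0$, set $\rho_k := \phi_k / \bigl(\sum_m \phi_m^4\bigr)^{1/4}$, which is smooth with support in $V_k$ and satisfies $\sum_k \rho_k^4 = 1$. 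Applying this to the local decomposition on each $V_k$ yields $g = \sum_k \rho_k^4 g = \sum_{k,\alpha} (\rho_k\, a_{k,\alpha})^4 (dy_1^{(k,\alpha)})^2$, where each coefficient $\rho_k a_{k,\alpha}$ is smooth on $\mathcal{M}$ (extended by zero outside $V_k$). Relabeling the pairs $(k,\alpha)$ by a single countable index $l$ produces families $\{h_l\}$ and $\{U_l\}$ (with each $V_k$ repeated finitely many times) satisfying the conclusion.

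The main obstacle is the algebraic step: strict positivity of the coefficients requires first normalizing $g(p) = I$ via a linear change of coordinates, since a general positive-definite matrix may lie on the boundary of the cone generated by squares of the chosen reference 1-forms. A secondary technical point is the need for a fourth-power partition of unity to preserve smoothness of the $a$ factor, resolved by the normalization above. Once these are in place, the rest reduces to paracompactness and elementary linear algebra.
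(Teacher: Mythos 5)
Your proof is correct, and its overall architecture matches the paper's: decompose $g$ locally on each chart as a finite positive smooth combination of squares of constant-coefficient linear $1$-forms, then globalize with a fourth-power partition of unity $\{\rho_l^4\}$ (the paper writes its partition as $\{\chi_l^4\}$, which is the same device). The one genuine difference lies in how the local decomposition is obtained. The paper invokes Günther's abstract Lemma~\ref{lemma gunther's tool lemma 2.3}, which upgrades a positive constant-coefficient decomposition $g(0)=\sum\alpha_k f_k(0)$ into a positive smooth-coefficient decomposition $g(x)=\sum a_k(x) f_k(x)$ on a shrunken ball, and feeds it the family $F=\{(\partial_i f\,\partial_j f)_{i\le j}: f \text{ linear}\}$. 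You instead normalize $g(p)=I$ by a linear change of chart and write down the explicit identity with coefficients $c_i = g_{ii}-(n-1)T$ and $\alpha_{ij}^{\pm}=(T\pm g_{ij})/2$ for $0<T<1/(n-1)$, whose positivity at $p$ (and hence nearby) is immediate; this bypasses Lemma~\ref{lemma gunther's tool lemma 2.3} entirely. Your route is more elementary and self-contained — everything is a visible formula — at the cost of committing to the specific reference forms $dx_i$ and $dx_i\pm dx_j$, whereas Günther's lemma is a more flexible positive-cone compactness argument that works for any family $F$ spanning a neighborhood of $g(0)$ in the positive cone and is reused elsewhere in his development. Both close the gap the same way once the local step is in hand; the rest is paracompactness and the observation that each $\phi_\alpha$ has nonvanishing differential and hence extends to a coordinate system, so each summand is literally of the form $a^4\,dy_1^2$ demanded by Property~(E).
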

For the convenience of readers, we attach an English translation of Günther's proof of Proposition~\ref{proposition gunther} in Appendix~\ref{appendix tranlation}. 


Let us introduce some notation. Denote the covering map $M\to {{{{M/\Gamma}}}}$ by $p$. For any simply connected, open subset $U\subset{M/\Gamma}$, ${p^{-1}(U)}$ can be represented as a disjoint union of open subsets of $M$. Each of these open subsets is diffeomorphic to ${U}$ via $p^{-1}$. Let $\widehat{U}$ be one of these open subsets of $M$.


\begin{lemma}
	\label{lemma localizing}
	There exists a finite open cover $\{{U_l}\}$ of $ {M/\Gamma} $ by simply-connected, open sets ${U_l}$, such that
	the metric $g$ of $M$ can be rewritten as the sum of finitely many smooth symmetric $2$-tensors $\widetilde{h}_l$,
	\begin{equation}
		\label{eq localize summation}
		g = \mathop{\sum}\limits_{l}\widetilde{h}_l
	\end{equation}
	where $\widetilde{h}_l$ is $\Gamma$-periodic and of Property~(E) on each $\tau\widehat{U}_l,\tau\in\Gamma$. Moreover, it is obvious that $ \widehat{U}_l$ does not intersect any $ \tau{\widehat{U}_l} $ for nontrivial $\tau\in\Gamma$.
	
\end{lemma}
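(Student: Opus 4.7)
The plan is to apply Proposition~\ref{proposition gunther} on the compact quotient $(M/\Gamma, \bar{g})$---where $\bar{g}$ is the unique Riemannian metric for which $p\colon M\to M/\Gamma$ is a local isometry, so that $p^*\bar{g}=g$---and then lift the resulting decomposition back to $M$ by pullback. Since $M/\Gamma$ is compact, Günther's locally finite cover reduces to a finite one $\{U_l\}$ by simply-connected, relatively compact open sets, together with tensors $h_l$ of Property~(E) on $U_l$ satisfying $\bar{g}=\sum_l h_l$.

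To secure the disjointness clause $\widehat{U}_l\cap\tau\widehat{U}_l=\emptyset$ for every nontrivial $\tau\in\Gamma$, I would arrange that each $U_l$ is \emph{evenly covered} by $p$, that is, $p^{-1}(U_l)=\bigsqcup_{\tau\in\Gamma}\tau\widehat{U}_l$. Such a cover exists because $p$ is a regular covering, so every point of $M/\Gamma$ admits an evenly covered neighborhood, and by compactness we can extract a finite subcover. This cover is used as the starting input to Günther's construction. Should Günther's proof not obviously subordinate to a prescribed cover, I would instead apply it as stated to obtain $(U_l, h_l)$ and then subdivide each $U_l$ into smaller evenly covered pieces $V_{l,k}\subset U_l$ (inheriting the Property~(E) coordinates of $U_l$) via a partition of unity of the form $\sum_k b_{l,k}^4=1$ on $U_l$; then $b_{l,k}^4 h_l=(b_{l,k}a_l)^4\,dx_1^2$ remains of Property~(E) on $V_{l,k}$, and $h_l=\sum_k b_{l,k}^4 h_l$.

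Once the evenly covered cover $\{U_l\}$ and tensors $\{h_l\}$ are fixed, define $\widetilde{h}_l:=p^*h_l$. The pullback is automatically $\Gamma$-periodic because $p\circ\tau=p$ for every $\tau\in\Gamma$, whence $\tau^*\widetilde{h}_l=(p\circ\tau)^*h_l=p^*h_l=\widetilde{h}_l$. Summing, $\sum_l\widetilde{h}_l=p^*\sum_l h_l=p^*\bar{g}=g$, which is the required decomposition~(\ref{eq localize summation}). Property~(E) on each $\tau\widehat{U}_l$ transfers by composing the local coordinates $(x_1,\dots,x_n)$ and the function $a_l$ from $h_l=a_l^4\,dx_1^2$ with the diffeomorphism $p|_{\tau\widehat{U}_l}\colon\tau\widehat{U}_l\to U_l$, yielding $\widetilde{h}_l|_{\tau\widehat{U}_l}=(a_l\circ p)^4\,d(x_1\circ p)^2$ with support still inside $\tau\widehat{U}_l$. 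The final clause of the lemma is then exactly the even-covering property of $\{U_l\}$.

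The main obstacle I anticipate is the compatibility step: ensuring the cover produced by Günther's construction can either be taken to be evenly covered from the outset, or else refined subordinate to such a cover without destroying Property~(E). The subdivision fix via a fourth-power partition of unity $\sum_k b_{l,k}^4=1$ requires only the observation that $t\mapsto t^{1/4}$ is smooth on $(0,\infty)$, so that if $c_{l,k}\geq 0$ are smooth bumps supported in $V_{l,k}$ with $\sum_k c_{l,k}^4>0$ on $U_l$, then $b_{l,k}:=c_{l,k}/\bigl(\sum_j c_{l,j}^4\bigr)^{1/4}$ is smooth and satisfies $\sum_k b_{l,k}^4=1$. Everything else---descent of the metric, pullback, periodicity, and disjointness---is formal once the cover is chosen correctly.
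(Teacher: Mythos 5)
Your proposal follows the same route as the paper: apply Proposition~\ref{proposition gunther} on the compact quotient $M/\Gamma$ (local finiteness plus compactness gives finiteness of the cover), then pull the decomposition back through $p$, noting that $\widetilde{h}_l := p^* h_l$ is $\Gamma$-periodic since $p\circ\tau=p$, and that Property~(E) transfers because $p$ is a local diffeomorphism. The one point where you over-engineer is the disjointness clause. You propose either to seed Günther's construction with an evenly covered cover, or to refine his cover afterward with a fourth-power partition of unity $\sum_k b_{l,k}^4=1$. Neither step is needed: Günther's cover already consists of (connected) simply-connected open sets, and any such set is automatically evenly covered under a covering map --- each connected component of $p^{-1}(U_l)$ is itself a covering of $U_l$, and a simply connected, locally contractible base admits only trivial connected coverings --- so the sheets $\tau\widehat{U}_l$, $\tau\in\Gamma$, are pairwise disjoint for free. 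This is exactly what the paper records as ``obvious.'' Your fallback subdivision is sound (and the $t\mapsto t^{1/4}$ trick for producing $\sum_k b_{l,k}^4=1$ is correct, provided $\sum_j c_{l,j}^4>0$ on a neighborhood of the support of $h_l$ rather than all of $U_l$, so the quotient stays smooth), but it is superfluous.
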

\begin{proof}
	Applying Proposition~\ref{proposition gunther} to ${M/\Gamma}$, we obtain a finite open cover $\{{U_l}\}, l=1,2,...,L$ of ${{{{M/\Gamma}}}}$ by simply connected open sets and smooth symmetric $2$-tensor fields $h_l$ of Property~(E) on ${U_l}$, such that $g=\mathop{\sum}\limits_{l}h_l$.
	
	
	For every $l$, we define $\widetilde{h}_l = h_l\circ p$. This is a $\Gamma$-periodic symmetric $2$-tensor field on $M$. Clearly, it is of Property~(E) on $\tau\widehat{U}_l$. The proof is concluded.

\end{proof}

The next lemma (Lemma~\ref{lemma technical tool by gunther}) is a key proposition proved by Günther \cite[Theorem~2.4, p.~169]{Gunther1989}. Given a smooth free embedding of $M$, it allows us to perturb the induced metric locally so that the image of the perturbed embedding stays arbitrarily close to that of the embedding itself with respect to the Euclidean distance in $\RR^q$. 
\begin{lemma}[Günther]
	\label{lemma technical tool by gunther}
	Let $\mathcal{M}$ be a smooth manifold. Let $U$ be a simply-connected, relatively compact open subset of $\mathcal{M}$ and $u_0:\mathcal{M}\to\RR^{q}$ a smooth free embedding with $q\geq s_n+n+5$. Let $h$ be a smooth symmetric $2$-tensor field of Property~(E) on $U$.
	
	Then for every $\epsilon>0$, there exists a smooth free embedding $u_{\epsilon}:\mathcal{M}\to\RR^{q}$, such that
	\begin{equation}
		du_{\epsilon}\cdot du_{\epsilon} = du_0 \cdot du_0 + h.
	\end{equation} 
	Moreover,
	\begin{enumerate}
		\item $u_{\epsilon} = u_0$ on $\mathcal{M}\setminus U$.
		\item  $|u_{\epsilon}(x) - u_0(x)|_{\RR^{q}}\leq \epsilon, \text{ for }x\in \mathcal{M}$.
		
	\end{enumerate}
\end{lemma}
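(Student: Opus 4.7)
The plan is to reproduce Günther's argument from \cite{Gunther1989}. Since $h$ is supported in $U$, set $v := u_\epsilon - u_0$ and look for $v$ also supported in $U$, so that the condition $u_\epsilon = u_0$ outside $U$ is automatic. Writing out $d(u_0+v)\cdot d(u_0+v) = du_0\cdot du_0 + h$ and imposing normality $\partial_i u_0 \cdot v = 0$ (whose differentiation gives $\partial_i u_0 \cdot \partial_j v = -\partial_i\partial_j u_0 \cdot v$), the task reduces to System~\ref{eq s_n+n}:
$$\partial_i u_0 \cdot v = 0, \qquad \partial_i\partial_j u_0 \cdot v = -\tfrac{1}{2}h_{ij} + \tfrac{1}{2}\partial_i v \cdot \partial_j v.$$
Since $u_0$ is free, the $s_n+n$ vectors $\{\partial_i u_0,\partial_i\partial_j u_0\}$ are pointwise linearly independent on $U$, and because $q\geq s_n+n+5$ we can algebraically pseudo-invert to express $v$ as a linear combination of these vectors with coefficients determined by the right-hand side. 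This converts the system into a fixed-point equation $v = F(v)$.

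The main obstacle is the usual loss-of-derivatives phenomenon: the quadratic term $\partial_i v\cdot\partial_j v$ means that $F$ differentiates $v$ twice in order to return a new $v$. Nash's original answer was smoothing together with a Nash--Moser iteration; Günther's key contribution, which is what must be reproduced here, is to exploit Property~(E) directly. In coordinates with $h = a^4\,dx_1^2$, only the $(1,1)$ component is nonzero, and one makes an ansatz in which $v$ carries a factor of $a^2$. Then $\partial v\cdot\partial v$ carries a factor of $a^4$ to leading order, matching the inhomogeneous term. Composing the algebraic inversion with a smoothing elliptic operator of the form $(I-\Delta)^{-k}$ (localized by a cutoff supported in $U$) regains the lost derivatives, and the reformulated map $\tilde F$ becomes a contraction on a small ball in $C^{2,\alpha}(U)$ provided $\|a\|_{C^{m}}$ is small for a suitable $m$. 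The Banach fixed-point theorem then delivers a unique $v$.

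To handle arbitrary $a$ and to obtain the $\epsilon$-closeness, I would use a scaling/partition argument: decompose $h = \sum_{k=1}^{K} h_k$ so that each $h_k$ still has Property~(E) on $U$ but with a much smaller coefficient (for instance, write $a^4 = \sum_k b_k^4$ with $\|b_k\|_{C^m}$ small), and apply the fixed-point step iteratively, perturbing $u_0$ into $u_0 + v_1$, then into $u_0 + v_1 + v_2$, and so on. Each individual $v_k$ can be made as $C^2$-small as desired by choosing $K$ large, so the accumulated perturbation $\sum v_k$ is $C^2$-small; this simultaneously yields the bound $|u_\epsilon(x) - u_0(x)|_{\RR^q}\leq\epsilon$ and preserves freeness, since freeness is a $C^2$-open condition. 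The support property is inherited at every step because $v_k$ is supported where $b_k$ is, which is a compact subset of $U$.
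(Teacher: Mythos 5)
The paper does not prove this lemma at all --- it is cited verbatim as Günther's Theorem~2.4 \cite[p.~169]{Gunther1989} and used as a black box, so there is no in-paper proof to compare against; what follows evaluates your sketch on its own merits.

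Your sketch captures the outer architecture of Günther's argument (reduce to System~\ref{eq s_n+n} via the normality ansatz, invert the derivative matrix of the free map, run a contraction in $C^{2,\alpha}$, and chop $a^4$ into small pieces to enlarge the basin of attraction), but the step that \emph{is} the theorem is not stated in a way that would work. Composing the algebraic inversion with a smoothing operator $(I-\Delta)^{-k}$ produces a different map whose fixed point no longer solves the original system; smoothing the right-hand side does not yield an equivalent problem. Günther's actual contribution is an identity-based rewriting: using Leibniz rules and the differentiated normality relations $\partial_i u_0\cdot\partial_j v = -\,\partial_i\partial_j u_0\cdot v$, the quadratic term $\partial_i v\cdot\partial_j v$ is expressed as $(\Delta - 1)^{-1}$ applied to an expression built from second derivatives of $v$ and quantities already controlled by the system --- an expression that lies in $C^{0,\alpha}$ whenever $v\in C^{2,\alpha}$, so that elliptic inversion lands back in $C^{2,\alpha}$ with no loss. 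Because this rewriting is a provable identity, the resulting fixed-point map is genuinely equivalent to the original one. Your sketch gestures at a ``reformulated map $\tilde F$'' but never supplies the reformulation, and that is exactly where the derivative loss is defeated; without it the contraction does not close.

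Two further gaps. First, your argument never uses the hypothesis $q\geq s_n + n + 5$: the extra codimension beyond the minimal $s_n + n$ is what Günther needs to keep the perturbed map free and injective, and it is part of what must be proved. Second, the claim that the solution $v$ of the fixed-point problem remains supported in $U$ is asserted rather than arranged --- $(\Delta - 1)^{-1}$ does not preserve compact supports, so one must set up the elliptic problem on a carefully chosen domain and exploit the vanishing of $h$ (hence of the source term) near $\partial U$ to conclude that $u_\epsilon=u_0$ off $U$.
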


We are now prepared to prove our main theorem.
\begin{proof}[Theorem~\ref{theorem main}]
	We prove (a) the existence of a free equivariant isometric immersion, and (b) an argument to avoid self-intersections.
	
	(a) This part is similar to Günther's original approach. We start with a free equivariant embedding $u_0:M\to\RR^q$ such that $g-du_0\cdot du_0$ is positive-definite. The existence of such an embedding is proved in Lemma~\ref{lemma initial}. Applying Lemma~\ref{lemma localizing} to $(M, g-du_0\cdot du_0)$, we obtain a finite open cover $\{{U_l}\}, l=1,2,...,L$ of $ {{{M/\Gamma}}} $ such that $g - du_0\cdot du_0 = \mathop{\sum}\limits_{l}\widetilde{h}_l$ where every $\widetilde{h}_l$ is of Property~(E) on every $\tau\widehat{U}_l, \tau\in\Gamma$.
		
	We want to construct a series of free equivariant embeddings $u_1,u_2,...,u_L:M\to\RR^q$ such that
	\begin{enumerate}
		\item $ du_l\cdot du_l = du_{l-1}\cdot du_{l-1} + \widetilde{h}_l$.
		\item $ |u_l(x)-u_{l-1}(x)|_{\RR^q}\leq \epsilon_l $. This inequality can be achieved for any $\epsilon_l>0$, and we choose $\epsilon_l$ later.
		\item $ u_l=u_{l-1} $ on $M\setminus {p^{-1}({U_l})} $.
	\end{enumerate}

	Let us start with $l=1$. Since the support of $\widetilde{h}_1$ is contained in $p^{-1}(U_l)$ and the latter can be represented as a disjoint union $ p^{-1}(U_1) =  \mathop{\sqcup}\limits_{\tau\in\Gamma}\tau{\widehat{U}_1} $, $ \widetilde{h}_1 $ is smooth and vanishing in a sufficiently small neighborhood of $\widehat{U}_1$. Hence it is possible to smoothly extend its restriction to $\widehat{U}_1$, $\widetilde{h}_1|_{\widehat{U}_1}$, to the entire $M$ such that $ \widetilde{h}_1|_{\widehat{U}_1}=0 $ for $M\setminus \widehat{U}_1$.
	
	Applying Lemma~\ref{lemma technical tool by gunther} to $\widehat{U}_1$ with $\widetilde{h}_1|_{\widehat{U}_1}$, we obtain that for every $\epsilon_1>0$, there exists a free embedding $\widehat{u}_1:M\to\RR^q$ such that 
	\[d\widehat{u}_1\cdot d\widehat{u}_1 = du_0 \cdot du_0 + \widetilde{h}_1|_{\widehat{U}_1}, \text{ for }x\in\widehat{U}_1.\]	
	Moreover $ |\widehat{u}_1-u_0|\leq \epsilon_1 $ and $\widehat{u}_1=u_0$ on $M\setminus\widehat{U}_1$. Since $\widehat{U}_1$ is diffeomorphic to ${U}_1$ (via $p|_{\widehat{U}_1}$), we can define $u_1:M\to\RR^q$ as
	\begin{equation}
		\label{u_1}
		u_1(x)=\begin{cases}
			&\widehat{u}_1\circ (p|_{\widehat{U}_1})^{-1} \circ p(x) , x\in{p^{-1}({U}_1)}\\
			&u_0(x), x\in M\setminus{p^{-1}({U}_1)}
		\end{cases} .
	\end{equation}
	Clearly, $u_1$ is an equivariant immersion. Since $\widehat{u}_1$ is free, so is $u_1$.
	
	Repeating the process above on $u_{l},l=1,...,L-1$, we obtain a series of free equivariant immersions $u_2,...,u_L$. It follows that $ du_L\cdot du_L = du_0\cdot du_0 + \mathop{\sum}\limits_{l}\widetilde{h}_l $ and $|u_L(x)-u_0(x)|\leq\mathop{\sum}\limits_{l:x\in{p^{-1}({U_l})}}\epsilon_l$. We want $u_L$ to be well-defined. That is, we want $|u_0-u_L|_{\RR^q}$ to be finite. Since $\widehat{U}_l$ does not intersect $\tau\widehat{U}_l$ for nontrivial $\tau\in\Gamma$, we have 
	\begin{equation}
		\mathop{\sum}\limits_{l:x\in{p^{-1}({U_l})}}\epsilon_l\leq \mathop{\sum}\limits_{l=1}^{L}\epsilon_l
	\end{equation}
	Here, the right hand side $\mathop{\sum}\limits_{l=1}^{L}\epsilon_l$ can be chosen to be finite. The proof of (a) is done.
	
	(b) We prove this part by choosing $\epsilon_l, l=1,2,...,L$ to be small enough. 
	
	Let us start with $l=1$. Recall that $u_{0}$ is an embedding. We show that it is possible to choose $\epsilon_1$ such that $u_1(M)$ has no self-intersection. We divide the proof into three cases.
	\begin{enumerate}
		\item $u_1(M\setminus p^{-1}({U}_1))$ has no self-intersection. 
		
		It follows from $u_1 = u_{0}$ on $M\setminus p^{-1}({U}_1)$.
		\item $u_1(p^{-1}({U}_1))$ has no self-intersection. 
		
		From Part (a), we know that $\widehat{u}_1:M\to\RR^q$ is an embedding, so $\widehat{u}_1(\widehat{U}_1)$ has no self-intersections. From the construction of $u_1$ in (a), $ u_1 = \widehat{u}_1\circ (p|_{\widehat{U}_1})^{-1} \circ p$ on $ p^{-1}({U}_1) $, so every $u_1(\tau\widehat{U}_1), \tau\in\Gamma$ does not have self-intersections. 
		
		It remains to show that $ u_1(\widehat{U}_1) $ does not intersect $ u_1(\tau\widehat{U}_1) $ for nontrivial $\tau\in\Gamma$. 
		
		
		Recall that our choice of ${U_1}$ (in Lemma~\ref{lemma localizing}) ensures that $ \min\limits_{\tau\in\Gamma}d_{\RR^q}(u_{0}(\widehat{U}_1), u_{0}(\tau\widehat{U}_1)) > 0 $. Take 
		\[\epsilon_1 \leq \frac{1}{2} \min\limits_{\tau\in\Gamma}d_{\RR^q}(u_{0}(\widehat{U}_1), u_{0}(\tau\widehat{U}_1))  .\] 
		
		It follows that every $u_1(\tau\widehat{U}_1)$ is contained in the $\epsilon_1$-neighborhood of $u_{0}(\tau\widehat{U}_1)$ in $\RR^q$.
		
		Since the $\epsilon_1$-neighborhoods of $u_{0}(\widehat{U}_1)$ and $u_{0}(\tau\widehat{U}_1)$ do not intersect, $ u_1(\widehat{U}_1) $ does not intersect $ u_1(\tau\widehat{U}_1)$.
		\item $ u_1(p^{-1}({U}_1)) $ does not intersect $ u_1(M\setminus p^{-1}({U}_1)) $. 
		
		It suffices to show that $u_1(\widehat{U}_1) $ does not intersect $ u_1(M\setminus p^{-1}({U}_1)) $. To see this, we note that $ u_1(M\setminus p^{-1}({U}_1)) = \widehat{u}_1(M\setminus p^{-1}({U}_1)) $ and $ u_1(\widehat{U}_1) = \widehat{u}_1(\widehat{U}_1) $. Since $\widehat{u}:M\to\RR^q$ is an embedding, the statement follows.
	\end{enumerate}
	By repeating this process inductively for $l=2,3,...,L$, the proof of (b) is concluded.
\end{proof}

\begin{appendices}
	\newpage
	\section{Günther's Proof of Proposition~\ref{proposition gunther}}
	\label{appendix tranlation}
	This is an English translation for the proof of Proposition~\ref{proposition gunther} \cite[Theorem~2.2, p.~168]{Gunther1989}. 
	
	
	\begin{theoremnonumber}[Günther]
		Let $ M^n $ be a smooth manifold equipped with metric ${U}$. There exists a locally finite open cover $\{U_l\}$ by simply-connected, relatively compact sets attached with at most countably many smooth $2$-tensor fields $h_l$ of Property~(E) on $U_l$ (Definition~\ref{definition property E}) satisfying
		\begin{equation}
			g = \mathop{\sum}\limits_{l}h_l
		\end{equation}
	\end{theoremnonumber}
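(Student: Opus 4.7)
The plan is to decompose the metric chart-by-chart via a linear-algebra identity expressing any positive-definite symmetric matrix as a positive smooth combination of rank-one matrices $w_k w_k^T$ drawn from a fixed finite family, and then to glue these local decompositions together by a partition of unity built from fourth powers of smooth bumps.

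First, I would choose a locally finite open cover $\{V_\alpha\}$ of $M$ by relatively compact, simply-connected coordinate balls, each equipped with coordinates $y^\alpha_1,\ldots,y^\alpha_n$. I would build a smooth partition of unity in fourth-power form: smooth non-negative $\chi_\alpha$ with $\mathrm{supp}(\chi_\alpha) \subset V_\alpha$ and $\sum_\alpha \chi_\alpha^4 \equiv 1$, produced from standard compactly supported bumps that are themselves fourth powers. Then I would fix once and for all a finite collection $w_1,\ldots,w_N \in \RR^n$ (for instance $\{e_i\} \cup \{e_i \pm e_j : i<j\}$) whose outer products positively span the open cone of positive-definite symmetric matrices. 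After possibly shrinking the cover so that on each $V_\alpha$ the local matrix $(g^\alpha_{ij}(y))$ stays close to a fixed positive-definite matrix, I would produce smooth strictly positive functions $c^\alpha_k(y)$ satisfying
\begin{equation*}
	(g^\alpha_{ij}(y)) = \sum_{k=1}^N c^\alpha_k(y)\, w_k w_k^T \quad\Longleftrightarrow\quad g = \sum_{k=1}^N c^\alpha_k(y)\, d(\xi^\alpha_k)^2 \text{ on } V_\alpha,
\end{equation*}
where $\xi^\alpha_k := w_k \cdot y^\alpha$.

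Since each $c^\alpha_k$ is smooth and strictly positive on the relevant region, its fourth root is smooth, so $a_{\alpha,k} := \chi_\alpha \cdot (c^\alpha_k)^{1/4}$ is smooth, supported in $V_\alpha$, and satisfies $a_{\alpha,k}^4 = \chi_\alpha^4\, c^\alpha_k$. Completing the linear function $\xi^\alpha_k$ to a full coordinate system on $V_\alpha$ exhibits $h_{\alpha,k} := a_{\alpha,k}^4\, d(\xi^\alpha_k)^2$ as being of Property~(E) on $V_\alpha$, and summing gives
\begin{equation*}
	g = \sum_\alpha \chi_\alpha^4\, g = \sum_\alpha \sum_{k=1}^N h_{\alpha,k}.
\end{equation*}
Re-indexing the pairs $(\alpha,k)$ as a single index $l$ with $U_l := V_{\alpha(l)}$ yields the desired cover and tensors; local finiteness of $\{U_l\}$ follows from that of $\{V_\alpha\}$ because only $N$ indices $k$ appear for each $\alpha$.

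The main obstacle is the smooth positive decomposition step: the coefficients $c^\alpha_k(y)$ must be simultaneously smooth and bounded away from $0$ throughout the support of $\chi_\alpha$, so that their fourth roots are smooth. I would handle this by refining the cover enough that $(g^\alpha_{ij}(y))$ varies in a small neighborhood of a fixed positive-definite base matrix, and then using a smooth right-inverse to the surjective linear map $(c_1,\ldots,c_N) \mapsto \sum_k c_k w_k w_k^T$ that carries positive-definite matrices into the positive orthant of $\RR^N$. A secondary technical point, namely that the partition-of-unity functions themselves have smooth fourth roots, is dispatched by taking the starting bumps to be explicit fourth powers of standard cutoff functions.
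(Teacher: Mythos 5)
Your overall plan — decompose $g$ locally as a positive combination of rank-one quadratic forms $d\xi_k^2$, take fourth roots of the smooth positive coefficients, glue with a partition of unity of the form $\sum\chi_\alpha^4=1$, and complete each linear function to a chart to exhibit Property~(E) — is essentially Günther's scheme, and the re-indexing and local-finiteness bookkeeping are fine. But there is a genuine gap in the linear-algebra step.

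The claim that a \emph{fixed} finite collection $w_1,\dots,w_N\in\RR^n$ (e.g.\ $\{e_i\}\cup\{e_i\pm e_j\}$) has outer products whose positive conical hull equals the open cone of positive-definite symmetric matrices is false: the positive span of finitely many rank-one matrices is an open \emph{polyhedral} cone, while the positive-definite cone is not polyhedral for $n\ge 2$. Concretely, in $n=2$ the matrix $\left(\begin{smallmatrix}100&5\\5&1\end{smallmatrix}\right)$ is positive-definite, yet writing it as $\alpha_1 e_1e_1^T+\alpha_2 e_2e_2^T+\beta_1(e_1+e_2)(e_1+e_2)^T+\beta_2(e_1-e_2)(e_1-e_2)^T$ forces $\beta_1-\beta_2=5$ and $\beta_1+\beta_2\le 1$, which is impossible with $\beta_1,\beta_2>0$. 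Consequently ``shrinking the cover so $(g^\alpha_{ij})$ stays near a fixed positive-definite matrix'' does not help if that matrix lies outside your polyhedral cone, and the ``smooth right-inverse carrying PD matrices into the positive orthant'' you invoke does not exist globally on the PD cone for the same reason.

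The fix is exactly what Günther builds in: he does \emph{not} fix the rank-one directions in advance. His family $F$ is indexed by \emph{all} linear functions on the chart, and his Lemma~2.3 chooses, for each base point $p$, finitely many linear forms $f_1,\dots,f_\ell$ adapted to $g_{ij}(p)$ (e.g.\ via a Cholesky factorization of $g(p)$, so the coefficients at $p$ are positive), and then extends that pointwise positive decomposition to a smooth positive decomposition on a smaller ball. Equivalently, you could keep your fixed $\{w_k\}$ but first perform a linear change of coordinates on each $V_\alpha$ so that $g^\alpha_{ij}(p_\alpha)=\delta_{ij}$; since the identity does lie in the interior of the positive hull of your $w_kw_k^T$, positivity of the $c^\alpha_k$ then holds on a small enough ball. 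Without either of these steps — per-chart choice of linear forms, or per-chart normalization of $g$ at the base point — the decomposition you assert need not exist.
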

	
	Let $V_\epsilon:=\{z\in \RR^n | |z|\leq \epsilon\}$ and $B:=V_1$. For the rest of this section, when we refer to a chart $(U, \varphi)$ on $M$, we implicitly assume that $\varphi(U) = V_\epsilon$, for some $\epsilon>0$. 
	
	A lemma \cite[Lemma~2.3, p.168]{Gunther1989} is needed.
	\begin{lemma}[Günther]
		\label{lemma gunther's tool lemma 2.3}
		Let \(g\in C^{r}(B,\RR^{D})\), and let \(F\subset C^{r}(B,\RR^{D})\) be a family of smooth maps with the following property:
		there exists a neighborhood \(W\subset\RR^{D}\) of \(g(0)\) such that every \(z\in W\) can be written as
		\[
		z=\alpha_{1}f_{1}(0)+\cdots+\alpha_{k}f_{k}(0),
		\]
		for suitable \(f_{1},\dots,f_{k}\in F\) and positive coefficients \(\alpha_{1},\dots,\alpha_{k}\in\RR\).
		Then there exist an open neighborhood \(B'\subset B\) of $0\in\RR^D$, positive functions
		\(a_{1},\dots,a_{\ell}\in C^{r}(B')\), and \(f_{1},\dots,f_{\ell}\in F\) such that
		\[
		g(x)=a_{1}(x)f_{1}(x)+\cdots+a_{\ell}(x)f_{\ell}(x)\qquad\text{for all }x\in B'.
		\]
	\end{lemma}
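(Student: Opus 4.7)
The plan is to reduce to a finite subfamily, pick a strictly positive coefficient vector at the basepoint whose induced linear map is surjective onto $\RR^{D}$, and then propagate to a neighborhood by the implicit function theorem.

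First I would argue that $F$ may be taken finite. The hypothesis says that the convex cone $C\subset\RR^{D}$ generated by $\{f(0):f\in F\}$ contains the neighborhood $W$ of $g(0)$. Picking a basis $e_{1},\dots,e_{D}$ of $\RR^{D}$ and $\delta>0$ small enough that $g(0)\pm\delta e_{j}\in W$ for each $j$, one writes each of these $2D$ vectors as a positive combination of finitely many $f(0)$'s, and collects the $f$'s used into a finite subfamily $\{f_{1},\dots,f_{\ell}\}\subset F$. The finitely generated cone $C'$ spanned by $\{f_{i}(0)\}$ still contains a convex open neighborhood of $g(0)$ (the interior of the convex hull of the $g(0)\pm\delta e_{j}$), so $g(0)$ lies in the interior of $C'$.

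Second, I would construct an $\alpha^{0}\in\RR^{\ell}_{>0}$ with $\sum_{i}\alpha_{i}^{0}f_{i}(0)=g(0)$ such that the linear map $\Phi:\RR^{\ell}\to\RR^{D}$, $\alpha\mapsto\sum_{i}\alpha_{i}f_{i}(0)$, is surjective. Surjectivity is automatic: the image of $\Phi$ is the linear span of $C'$, which has nonempty interior and hence equals $\RR^{D}$. For strict positivity, since $g(0)\in\operatorname{int}(C')$, for each $i$ and small $\epsilon>0$ one has $g(0)-\epsilon f_{i}(0)\in C'$, giving a representation of $g(0)$ in which the coefficient of $f_{i}$ is strictly positive; averaging these $\ell$ representations yields a single one in which \emph{every} coefficient is strictly positive.

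Third, I would apply the implicit function theorem to $G(x,\alpha):=\sum_{i=1}^{\ell}\alpha_{i}f_{i}(x)-g(x)$, which is $C^{r}$ on $B\times\RR^{\ell}$ with $G(0,\alpha^{0})=0$. The partial derivative $\partial_{\alpha}G(0,\alpha^{0})$ equals the surjective map $\Phi$, so fixing $\ell-D$ of the coefficients at their initial values $\alpha_{i}^{0}>0$ and solving for the remaining $D$ via the implicit function theorem produces $C^{r}$ functions $a_{i}(x)$ on a neighborhood $B'\subset B$ of $0$ satisfying $a_{i}(0)=\alpha_{i}^{0}>0$ and $\sum_{i}a_{i}(x)f_{i}(x)=g(x)$ on $B'$; positivity on all of $B'$ follows by continuity after shrinking $B'$. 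The hard part here will be the positivity assertion of the second step: without ensuring every coefficient at the basepoint is strictly positive, the implicit function theorem would be free to drive some $a_{i}$ negative, and it is precisely the hypothesis that $W$ is a \emph{neighborhood} of $g(0)$, placing $g(0)$ in the interior of $C'$, that rules this out.
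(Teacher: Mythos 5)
Your proposal is a correct and complete proof. Note that the paper itself does not prove this lemma: it is stated verbatim as \cite[Lemma~2.3, p.~168]{Gunther1989} and used as a black box in the appendix's translation of Günther's proof of Proposition~\ref{proposition gunther}, so there is no in-paper argument to compare against.

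On the merits: the three-step structure is sound. Passing to a finite subfamily via positive representations of $g(0)\pm\delta e_j$ is clean, and it correctly yields $g(0)\in\operatorname{int}(C')$ since the cross-polytope $\operatorname{conv}\{g(0)\pm\delta e_j\}$ sits inside the convex cone $C'$. Surjectivity of $\Phi$ is indeed automatic from the interior condition, and the averaging trick to make every coordinate of $\alpha^{0}$ strictly positive is correct: writing $g(0)-\epsilon_i f_i(0)$ as a nonnegative combination and adding back $\epsilon_i f_i(0)$ yields, for each $i$, a representation with the $i$-th coefficient bounded below by $\epsilon_i$, and the average inherits strict positivity in every slot. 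The only small thing you leave implicit in the implicit-function-theorem step is that surjectivity of $\Phi$ guarantees the existence of $D$ indices on which the corresponding $D\times D$ minor of the constant matrix $\bigl(f_1(0)\,\cdots\,f_\ell(0)\bigr)$ is nonsingular; once those indices are chosen to be the solved-for variables, the IFT applies and continuity of the resulting $a_i$ at $(0,\alpha^{0})$ gives positivity on a shrunken $B'$. This is the natural argument and is almost certainly what Günther's original (untranslated here) proof does as well.
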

	
	\begin{proof}[Günther's theorem]
		Fix \(p\in M\). There is a relatively compact neighborhood \(U_{p}\subset M\) and a \(C^{r}\)-diffeomorphism
		\(\varphi_{p}\) from $U_p$ onto the open unit ball $B\in\RR^n$ with \(\varphi_{p}(p)=0\in\mathbb{R}^{n}\).
		Let $x_{1},\dots,x_{n}:U_p\to\RR$ be the coordinate functions given by \((U_{p},\varphi_{p})\) and let
		\begin{equation}
			L = \{f\in C^\infty(B)|f(x) = \mathop{\sum}\limits_{i=1}^na_ix_i,a_i\in\RR\}
		\end{equation}
		be the set of linear functions on $B$. Let $\chi\in C^{\infty}(M)$ be a function whose support is contained in $U_p$, and $f\in L$. Then the tensor field $h:=(\chi^4\partial_if\partial_jf)$ defines a symmetric $2$-tensor field of Property~(E) on $U_p$ (Definition~\ref{definition property E}).
		
		Let
		\begin{equation}
			F:=\{(\partial_if\partial_jf)_{1\leq i\leq j\leq n}|f\in L\}\subset C^\infty(B,\RR^{n(n+1)/2}).
		\end{equation}
		
		Let \(z=(z_{ij})_{1\le i\le j\le n}\in\mathbb{R}^{n(n+1)/2}\) be a vector such that $|z_{ij}-g_{ij}(0)|$ is sufficiently small so that 
		the $(n\times n)$-matrix \((z_{ij})\) is positive-definite. Hence there exist coefficients \(c_{ij}\in\mathbb{R}\) such that 
		\begin{equation}
			z_{ij} = \mathop{\sum}\limits_{l=1}\partial_if_l(0)\partial_jf_l(0), 
		\end{equation}
		where $f_l(x):=\mathop{\sum}\limits_{k=1}^nc_{kl}x_k, l=1,2,...,n$.
		
		Therefore the conditions of Lemma~\ref{lemma gunther's tool lemma 2.3} are satisfied by ${U}$ and $F$. Recall our notation: $V_r$ is the open ball of radius $r$ in $\RR^n$ centered at the origin. Applying Lemma~\ref{lemma gunther's tool lemma 2.3} to ${U}$ and $F$, we obtain \(\varepsilon_{p}>0\),
		\(\ell_{p}\in\mathbb{N}\), positive functions \(a_{1},\dots,a_{\ell_{p}}\in C^{r}(V_{\epsilon_p})\),
		and associated fields \(f_{1},\dots,f_{\ell_{p}}\in F\) such that on \(V_{\epsilon_p}\)
		\begin{equation}
			g_{ij}(x)=\mathop{\sum}\limits_{k=1}^{l_p}a_{k}^4(x)\partial_if_k(x)\partial_jf_k(x), x\in V_{\epsilon_p}.
		\end{equation}
		
		The family
		\begin{equation}
			\label{eq choices of U_l can be small}
			\mathcal{B}=\{\varphi_{p}^{-1}(V_{r}) : r\in(0,\epsilon_{p}],\ p\in M\},
		\end{equation}
		where $V_r$ is the open ball of radius $r$ in $\RR^n$ centered at the origin, defines a topological basis of \(M\).
		Choose a countably many, locally finite cover \(\{U_{l}\}_{l\geq 1}\subset\mathcal{B}\) by relatively compact sets and a partition of unity
		\(\{\chi_{l}^4\}_{l\geq 1}\) subordinate to it. As shown above, every \(\chi_{l}^4g\) is a finite sum of \(C^{r}\) tensor fields, each of which is of property~(E) (on its support)
		and has a support contained in \(U_{l}\). Since the cover $\{U_l\}$ is chosen to be locally finite, the sum
		\[
		\sum_{l}\chi_{l}^4g
		\]
		always has at most finitely many non-vanishing terms on $M$ and hence expresses \(g\) as a (at most countably many) sum of \(C^{r}\) tensor fields of Property~(E). This concludes the proof.
	\end{proof}
\end{appendices}

\newpage
\bibliography{D:/latex/ref}
\bibliographystyle{plainurl}
\end{document}